\documentclass[12pt]{article}
\usepackage{a4}
\usepackage{amsmath}
\usepackage{amssymb}
\usepackage{amsfonts}
\usepackage{amsthm}
\usepackage{graphicx}
\usepackage{color}
  \newtheorem{thm}{Theorem}[section]
 \newtheorem{cor}[thm]{Corollary}

 \newtheorem{lemma}[thm]{Lemma}

\def\R{\mathbb{R}}

\def\Z{\mathbb{Z}}

\author {Christian Berg and Henrik L.\ Pedersen}
\title {On the entropy of the log-normal distribution}

\date{\today}

\begin{document}
\maketitle

\begin{abstract} In a paper by Novi Inverardi and Tagliani from 2024  it is claimed that the log-normal density has maximal entropy in the class of densities having the same moments as the log-normal density. We disprove the claim by finding a family of densities with larger entropy.
 \end{abstract}
\noindent {\em \small 2020 Mathematics Subject Classification: Primary:60E05   Secondary:62B10} 

\noindent {\em \small Keywords: Entropy, Log-normal distribution, Stieltjes class}

\section{Introduction and main results}
For a probability density $f$  on an interval $I$ the quantity
\begin{equation}\label{eq:ent}
H[f]:=-\int_I f(x)\log f(x)\,dx   
\end{equation}
is called the differential entropy or just the entropy of $f$, cf.\ \cite {C:T}, \cite{K:K}.

A Stieltjes moment sequence is a sequence of numbers $(m_k)_{k\ge 0}$ for which there exists a positive measure $\mu$ on the interval $[0,\infty)$  with moments of any order satisfying
\begin{equation}\label{eq:mom}
m_k=\int_0^\infty x^k\,d\mu(x),\quad k=0,1,\ldots.
\end{equation} 
Since we will be dealing only with probability measures $\mu$, we assume that the moment sequence starts with $m_0=1$. The moment sequence is called Stieltjes determinate if there is only one probability measure on $[0,\infty)$ satisfying \eqref{eq:mom}, and it is  called Stieltjes indeterminate, if there are more than one such measure, and in this case the set of all such measures is an infinite convex set $W$ which is compact in  the weak topology, cf. \cite{Sch}.   

In the Stieltjes indeterminate case the subset $\mathcal C\subset W$ consisting of probability densities is dense in $W$, but $W$ also contains many discrete measures as well as continuous singular measures. The first examples of Stieltjes indeterminate measures were given by Stieltjes \cite{St}, and among those is the log-normal density

\begin{equation}\label{eq:ln}
g(x)=\frac{1}{\sqrt{2\pi}}x^{-1}\exp(-\log^2(x)/2),\quad x>0,
\end{equation}
with moments
$$
m_k=\exp(k^2/2),\quad  k=0,1,\ldots.
$$ 
Stieltjes' proof consists in showing that the family
of densities
\begin{equation}\label{eq:SC}
g_r(x)=g(x)[1+r\sin(2\pi\log x)],\quad -1\le r\le 1,
\end{equation}
has the same moments $m_k$, which is true because

$$
t_k:=\int_0^\infty x^{k} g(x)\sin(2\pi\log x)\,dx=0,\quad k\in\Z.
$$ 
 To see this we apply the substitution $t=\log x$ and get
\begin{eqnarray*}
t_k&=&\frac{1}{\sqrt{2\pi}}\int_{-\infty}^\infty \exp(tk-t^2/2)\sin(2\pi t)\,dt\\
&=& \frac{\exp(k^2/2)}{\sqrt{2\pi}}\int_{-\infty}^\infty \exp(-(t-k)^2/2)\sin(2\pi t)\,dt \\
&=& \frac{\exp(k^2/2)}{\sqrt{2\pi}}\int_{-\infty}^\infty \exp(-t^2/2)\sin(2\pi(t+k))\,dt\\
&=& \frac{\exp(k^2/2)}{\sqrt{2\pi}}\int_{-\infty}^\infty \exp(-t^2/2)\sin(2\pi t)\,dt=0,
\end{eqnarray*}
where we used the periodicity of $\sin$ and that it is a odd function.

It is easy to evaluate the entropy $H[g]$ of the log-normal density $g$. We get
\begin{equation}\label{eq:entlogn}
H[g]=(1+\log(2\pi))/2.
\end{equation}

A family of densities like \eqref{eq:SC} with the same moments  is called a Stieltjes class in \cite{Sto}. A variant of the log-normal distribution and many of the distributions having the same moments are studied in \cite{Chr}. In the paper \cite{B} there is a study of the entropy for a class of densities which are solutions to  the general indeterminate Hamburger moment problem.

Various papers have been exploiting the existence of a density $f_{hmax}\in\mathcal C$ such that
\begin{equation}\label{eq:max}
H[f_{hmax}]=\max_{f\in\mathcal C}H[f],
\end{equation}
cf. \cite{N:T2}. 

In \cite[Theorem 2]{N:T3} the authors claim that  the log-normal density $g$ in \eqref{eq:ln} is equal to the density with maximal entropy. 

The goal of the present paper is to demonstrate that this is not true because of the following result:

\begin{thm}\label{thm:1} The entropy $H[g_r]$ of the family of densities \eqref{eq:SC} is given as the power series
\begin{equation}\label{eq:pow}
H[g_r]=H[g]+\alpha r -\sum_{k=1}^\infty \frac{\beta_k}{2k(2k-1)}r^{2k},\quad -1\le r\le 1,
\end{equation}
where
\begin{eqnarray}
\alpha&=&\frac{1}{\sqrt{2\pi}}\int_{-\infty}^\infty \exp(-t^2/2) t\sin(2\pi t)\,dt,\label{eq:alpha}\\
\beta_k&=&\frac{1}{\sqrt{2\pi}}\int_{-\infty}^\infty\exp(-t^2/2)\sin^{2k}(2\pi t)\,dt, k=1,2,\ldots.\label{eq:betak}
\end{eqnarray}
\end{thm}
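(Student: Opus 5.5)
We compute $H[g_r]$ directly by splitting the logarithm of the product. Since $g_r=g\,(1+r s)$ with $s(x)=\sin(2\pi\log x)$, we have
$$
H[g_r]=-\int_0^\infty g(1+rs)\log g\,dx-\int_0^\infty g(1+rs)\log(1+rs)\,dx .
$$
In the first integral we insert $-\log g(x)=\tfrac12\log(2\pi)+\log x+\tfrac12\log^2 x$ and substitute $t=\log x$, so that $g(x)\,dx$ becomes the standard Gaussian measure $\gamma(dt)=(2\pi)^{-1/2}e^{-t^2/2}dt$. The term without $r$ gives $H[g]$ as in \eqref{eq:entlogn}. The $r$-term is
$$
r\int\bigl(\tfrac12\log(2\pi)+t+\tfrac12t^2\bigr)\sin(2\pi t)\,\gamma(dt).
$$
The constant and $t^2$ parts vanish because the integrand is odd, leaving exactly $\alpha r$ with $\alpha$ as in \eqref{eq:alpha}. (Equivalently, the constant part is $t_0=0$.)

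For the second integral, after the same substitution it equals $\int\phi(r\sin 2\pi t)\,\gamma(dt)$, where $\phi(u)=(1+u)\log(1+u)$. For $|u|<1$,
$$
\phi(u)=u+\sum_{n\ge2}\frac{(-1)^n u^n}{n(n-1)},
$$
which follows from $\phi''(u)=1/(1+u)$ with $\phi(0)=0$ and $\phi'(0)=1$. We integrate term by term against $\gamma$. The linear term gives $r\int\sin(2\pi t)\,\gamma(dt)=0$. Every odd power $\sin^{2k+1}(2\pi t)$ integrates to $0$ by oddness. The even power $n=2k$ contributes $r^{2k}\beta_k/(2k(2k-1))$. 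Subtracting this from the first integral yields \eqref{eq:pow} for $|r|<1$.

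The main obstacle is justifying the interchange of sum and integral, and then extending to the endpoints $r=\pm1$. For $|r|<1$ this is easy, since $|r\sin|\le|r|<1$ gives uniform convergence of the series in $t$. At the endpoints we use two facts. First, the coefficients $\beta_k/(2k(2k-1))$ are positive with $\beta_k\le1$, so the series $\sum_k\beta_k r^{2k}/(2k(2k-1))$ converges absolutely and uniformly on $[-1,1]$ and is continuous there. Second, $r\mapsto\int\phi(r\sin 2\pi t)\,\gamma(dt)$ is continuous on $[-1,1]$ by dominated convergence, because $\phi$ is continuous and bounded on $[-1,1]$ (with $\phi(-1)=0$ by the convention $0\log0=0$). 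Letting $r\to\pm1$ then gives the formula on the closed interval. As an alternative for the endpoint case, one could apply monotone convergence to the nonnegative even-order terms directly.
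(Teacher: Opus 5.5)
Your proof is correct and follows the same route as the paper. The shared steps are the substitution $t=\log x$, the parity argument that isolates $\alpha r$, and a power-series expansion of the logarithmic part integrated term by term, with the odd powers vanishing by symmetry. There are two differences. First, you expand $\phi(u)=(1+u)\log(1+u)$ in one go, rather than expanding $T_2$ and $T_3$ separately via $\log(1+z)$ and combining $\frac{1}{2k}-\frac{1}{2k-1}=-\frac{1}{2k(2k-1)}$. Second, you explicitly justify the endpoints $r=\pm1$ by a continuity argument, which the paper's proof, written only for $|r|<1$, leaves implicit.
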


\begin{cor}\label{thm:2} For the family of densities $g_r$ from \eqref{eq:SC} we have 
\begin{equation}
H[g]<\max_{r\in[-1,1]}H[g_r]
\end{equation}
and the maximum is realized for a unique number $0<r_0<\alpha/\beta_1$.
\end{cor}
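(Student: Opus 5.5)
We prove the Corollary from Theorem~\ref{thm:1}, viewing $\varphi(r):=H[g_r]-H[g]=\alpha r-\sum_{k\ge1}\frac{\beta_k}{2k(2k-1)}r^{2k}$ as a real function on $[-1,1]$. The plan is to compute $\alpha$ and $\beta_1$ explicitly, show that $\varphi$ is strictly concave, and locate its maximiser from the sign of $\varphi'$.

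First, $\alpha>0$. Integrating by parts, or differentiating the Gaussian Fourier transform $\frac{1}{\sqrt{2\pi}}\int e^{-t^2/2}\cos(st)\,dt=e^{-s^2/2}$ with respect to $s$ at $s=2\pi$, gives
\[
\alpha=2\pi e^{-2\pi^2}>0.
\]
Similarly, $\sin^2(2\pi t)=(1-\cos(4\pi t))/2$ gives
\[
\beta_1=\tfrac12\bigl(1-e^{-8\pi^2}\bigr)\in(0,\tfrac12).
\]
All $\beta_k$ are positive and satisfy $\beta_k\le1$, since $0\le\sin^{2k}\le1$ and $\sin^{2k}$ is not a.e.\ zero against the Gaussian. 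Hence the series and its termwise derivatives converge absolutely on $[-1,1]$ (for the second derivative at $|r|=1$ one may use monotone convergence, or just work on the open interval). We get
\[
\varphi'(r)=\alpha-\sum_{k\ge1}\frac{\beta_k}{2k-1}r^{2k-1},\qquad
\varphi''(r)=-\sum_{k\ge1}\beta_k r^{2k-2}\le-\beta_1<0.
\]
So $\varphi$ is strictly concave on $[-1,1]$, which makes any maximiser unique.

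Next comes the location of the maximum. We have $\varphi(0)=0$ and $\varphi'(0)=\alpha>0$, so $\varphi(r)>0$ for small $r>0$, giving the strict inequality $H[g]<\max_r H[g_r]$. For $r\in[-1,0]$ every term $-\frac{\beta_k}{2k-1}r^{2k-1}$ is $\ge0$, so $\varphi'>0$ there and the maximum lies in $(0,1]$. At $r=1$,
\[
\varphi'(1)=\alpha-\sum_k\frac{\beta_k}{2k-1}\le\alpha-\beta_1<0,
\]
because $\alpha=2\pi e^{-2\pi^2}$ is astronomically small while $\beta_1\approx\tfrac12$. Thus $\varphi'$ has a unique zero $r_0\in(0,1)$, and this is the unique maximiser.

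Finally we bound $r_0$. For $r>0$ we have $\varphi'(r)<\alpha-\beta_1 r$, since the terms with $k\ge2$ are strictly positive. At $r=\alpha/\beta_1$, which is less than $1$ by the estimate above, this gives $\varphi'(\alpha/\beta_1)<0$. Since $\varphi'$ is strictly decreasing, $r_0<\alpha/\beta_1$.

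The main obstacle is the behaviour at the endpoint $r=1$, where $\log g_r$ has singularities because $1+\sin$ vanishes. The series converges there by Theorem~\ref{thm:1}, but differentiability up to the endpoint is delicate. The argument above avoids relying on differentiability at $r=1$: concavity on $(-1,1)$ together with continuity of $\varphi$ on $[-1,1]$ suffices, since $\varphi'<0$ on $(\alpha/\beta_1,1)$.
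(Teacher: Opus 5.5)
Your proposal is correct and follows the paper's argument: strict concavity from the series, $\varphi'(0)=\alpha>0$, and $\alpha>\beta_1 r_0$ read off from the critical-point equation. You are in fact slightly more careful than the paper in checking $\alpha/\beta_1<1$, which guarantees the critical point lies inside $(0,1)$. One small slip is that $\beta_k\le 1$ alone does not make the differentiated series converge at $|r|=1$; since $\beta_k\sim(\pi k)^{-1/2}$, the series $\sum\beta_k$ for $\varphi''$ even diverges there. Nothing is lost, though, because your final argument uses $\varphi'$ only on $(-1,1)$ together with continuity of $\varphi$ on $[-1,1]$.
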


Exact values of $\alpha$ and $\beta_k$ together with numerical approximations of $\alpha$, $r_0$ and $H[g_{r_0}]$ are given in the next lemmas. The numerical results illustrate that $H[g_{r_0}]$ is very close to $H[g]$.
\begin{lemma}\label{thm:evalf} We have
$$
\alpha=2\pi\exp(-2\pi^2),\quad \beta_1=(1-\exp(-8\pi^2))/2
$$
and in general, for $k\geq 1$,
$$
\beta_k=4^{-k}\left(\binom{2k}{k}+2\sum_{\ell=1}^{k}\binom{2k}{k+\ell}(-1)^{\ell}\exp(-8\pi^2\ell^2)\right).
$$
\end{lemma}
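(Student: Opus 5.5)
All three formulas follow from the Gaussian Fourier transform
\[
\frac{1}{\sqrt{2\pi}}\int_{-\infty}^\infty \exp(-t^2/2)\cos(\omega t)\,dt=\exp(-\omega^2/2),\qquad \omega\in\R,
\]
together with its derivative in $\omega$. We combine this with the elementary trigonometric identities that reduce $\sin^{2k}$ to a sum of cosines. The odd integrand $\exp(-t^2/2)\sin(\omega t)$ contributes nothing, so only cosine integrals appear.

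For $\alpha$, we differentiate the displayed identity with respect to $\omega$; differentiation under the integral sign is justified by the Gaussian factor. This gives
\[
\frac{1}{\sqrt{2\pi}}\int_{-\infty}^\infty \exp(-t^2/2)\,t\sin(\omega t)\,dt=\omega\exp(-\omega^2/2).
\]
Setting $\omega=2\pi$ yields $\alpha=2\pi\exp(-2\pi^2)$. Equivalently, we may integrate by parts in \eqref{eq:alpha}, writing $t\exp(-t^2/2)=-\frac{d}{dt}\exp(-t^2/2)$, which turns the integral into $2\pi$ times the cosine integral at $\omega=2\pi$.

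For $\beta_k$, we use the binomial expansion
\[
\sin^{2k}x=4^{-k}\left(\binom{2k}{k}+2\sum_{\ell=1}^{k}(-1)^\ell\binom{2k}{k+\ell}\cos(2\ell x)\right).
\]
It follows from writing $\sin x=(e^{ix}-e^{-ix})/(2i)$ and $(2i)^{2k}=(-4)^k$. Expanding $(e^{ix}-e^{-ix})^{2k}$ gives the terms $\binom{2k}{j}(-1)^{2k-j}e^{i(2j-2k)x}$. Setting $j=k+\ell$ gives the sign $(-1)^{k-\ell}$, and combined with $(-1)^k$ from $(-4)^{-k}$ this leaves $(-1)^\ell$. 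Pairing $\ell$ with $-\ell$ and using $\binom{2k}{k+\ell}=\binom{2k}{k-\ell}$ produces $2\cos(2\ell x)$.

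We then substitute $x=2\pi t$ and integrate term by term against the normalized Gaussian. The constant term contributes $\binom{2k}{k}$. Each cosine term, with frequency $\omega=4\pi\ell$, contributes $\exp(-(4\pi\ell)^2/2)=\exp(-8\pi^2\ell^2)$. This gives the stated formula. For $k=1$ it reduces to $\frac14(2-2\exp(-8\pi^2))=(1-\exp(-8\pi^2))/2$, matching the claimed value of $\beta_1$.

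Nothing here is a real obstacle. The only point needing care is the bookkeeping of signs in the binomial expansion of $\sin^{2k}$, which is why I would check it against the case $k=1$, where $\sin^2x=(1-\cos 2x)/2$.
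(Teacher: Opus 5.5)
Your proposal is correct and follows essentially the same route as the paper. You differentiate the Gaussian cosine transform in the frequency to obtain $\alpha$, then expand $\sin^{2k}$ via Euler's formula into a finite sum of cosines with frequencies $4\pi\ell$ and integrate term by term; the paper keeps the exponentials $\exp(2ibt\ell)$ until integrating, but this is only cosmetic, and your sign bookkeeping and the $k=1$ check are right.
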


\begin{lemma} For the reader's convenience we state the following numerical approximations.
\begin{align*}
H[g]&\approx 1.4189\\
H[g_{-1}]&\approx 1.1121\\
H[g_{1}]&\approx 1.1121\\
\alpha&\approx 1.68\cdot 10^{-8}\\
H[g_{1}]-H[g_{-1}]=2\alpha&\approx 3.36\cdot 10^{-8}\\
 0<r_0<\alpha/\beta_1&\approx 3.36\cdot 10^{-8}\\
 r_0&\approx 3.36\cdot 10^{-8}\\
0<H[g_{r_0}]-H[g]<\alpha^2/\beta_1&\approx 5.65\cdot 10^{-16}\\
H[g_{r_0}]-H[g]&\approx 2.83\cdot 10^{-16}
\end{align*}
\end{lemma}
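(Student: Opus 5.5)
All quantities will be computed from the closed forms in Lemma \ref{thm:evalf} and the series \eqref{eq:pow} of Theorem \ref{thm:1}; what needs care is keeping rigorous control of the tiny error terms. For $H[g]$ I simply evaluate \eqref{eq:entlogn}: $(1+\log(2\pi))/2\approx 1.41894$. For $\alpha$, Lemma \ref{thm:evalf} gives $\alpha=2\pi e^{-2\pi^2}$. Since $2\pi^2\approx 19.739$, we get $e^{-2\pi^2}\approx 2.67\cdot 10^{-9}$ and hence $\alpha\approx 1.68\cdot 10^{-8}$. The identity $H[g_1]-H[g_{-1}]=2\alpha$ is immediate from \eqref{eq:pow}, because only the linear term is odd in $r$.

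For $H[g_{\pm1}]$ I put $r=\pm1$ in \eqref{eq:pow}. I split $\beta_k=c_k+\varepsilon_k$ with $c_k=4^{-k}\binom{2k}{k}$. The remainder satisfies $|\varepsilon_k|\le 2\cdot 4^{-k}\sum_\ell\binom{2k}{k+\ell}e^{-8\pi^2}\le e^{-8\pi^2}<10^{-34}$, so replacing $\beta_k$ by $c_k$ changes the sum by at most $10^{-34}\sum_k 1/(2k(2k-1))=10^{-34}\log 2$, which is negligible.

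The main sum $S=\sum_{k\ge1}c_k/(2k(2k-1))$ I will evaluate in closed form rather than numerically. The key facts are:
\begin{itemize}
\item $c_k=\frac1{2\pi}\int_0^{2\pi}\sin^{2k}\theta\,d\theta$;
\item $(1+u)\log(1+u)=u+\sum_{n\ge2}(-1)^n u^n/(n(n-1))$ for $|u|\le1$, where the series converges absolutely and uniformly.
\end{itemize}
Integrating the second fact termwise against the first, the odd powers average to zero and I obtain
\[
S=\frac1{2\pi}\int_0^{2\pi}(1+\sin\theta)\log(1+\sin\theta)\,d\theta .
\]
This elementary integral equals $1-\log 2$; I would verify it via $\int_0^{2\pi}\log(1+\sin\theta)\,d\theta=-2\pi\log 2$ together with an integration by parts for the $\sin\theta\log(1+\sin\theta)$ part, which gives $2\pi$. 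Hence $H[g_{\pm1}]=H[g]\pm\alpha-(1-\log2)\approx 1.1121$.

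For $r_0$, set $\varphi(r)=H[g_r]-H[g]=\alpha r-\psi(r)$ with $\psi(r)=\sum_k\beta_k r^{2k}/(2k(2k-1))$.
\begin{itemize}
\item Since $\psi'(r)=\sum_k \beta_k r^{2k-1}/(2k-1)$ is strictly increasing on $[0,1]$, the equation $\varphi'(r_0)=0$ has the unique root $r_0$, as in Corollary \ref{thm:2}.
\item Because $\psi'(r)\ge \beta_1 r$, the root satisfies $r_0<\alpha/\beta_1$. Also $\psi'(r)\le \beta_1 r(1+O(r^2))$ with $r\le 4\cdot10^{-8}$, so $r_0=(\alpha/\beta_1)(1+O(10^{-15}))$.
\item Since $\beta_1=(1-e^{-8\pi^2})/2$ is $1/2$ to 34 digits, $r_0\approx 2\alpha\approx 3.36\cdot10^{-8}$.
\end{itemize}
For the entropy gain, $0<\varphi(r_0)\le\alpha r_0<\alpha^2/\beta_1\approx 5.65\cdot10^{-16}$. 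More precisely, $\varphi(r_0)=\alpha r_0-\beta_1r_0^2/2+O(r_0^4)=\frac{\alpha^2}{2\beta_1}(1+O(10^{-15}))\approx 2.83\cdot10^{-16}$.

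I expect the main obstacle to be the closed-form evaluation $S=1-\log 2$, specifically justifying the termwise integration at the endpoint $|u|=1$. The series converges absolutely there since its terms are $O(n^{-2})$, so Weierstrass' M-test suffices, after which the remaining integral is routine.
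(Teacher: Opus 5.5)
Your proposal is correct, but it takes a genuinely different route from the paper. The paper gives no argument beyond evaluating the closed forms of Lemma~\ref{thm:evalf} in Maple and truncating the series \eqref{eq:pow} and \eqref{eq:r0} at $k\le 150$, whereas you derive every number analytically with explicit error control.

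For the endpoints, you split off $4^{-k}\binom{2k}{k}$ from $\beta_k$, with error at most $e^{-8\pi^2}$, and sum the main part through the uniformly convergent expansion of $(1+u)\log(1+u)$ averaged over $\theta$. This gives the closed form $H[g_{\pm1}]=\log 2+\frac{\log(2\pi)-1}{2}\pm\alpha+O(e^{-8\pi^2})\approx 1.11209$. The integration by parts across the zero of $1+\sin\theta$ at $3\pi/2$ is harmless, since $\cos\theta\log(1+\sin\theta)\to 0$ there. For the maximizer, your perturbative treatment of \eqref{eq:r0} gives $r_0=\frac{\alpha}{\beta_1}(1+O(r_0^2))$ and $H[g_{r_0}]-H[g]=\frac{\alpha^2}{2\beta_1}(1+O(r_0^2))$.

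What this buys over the paper is twofold. First, at $r=\pm1$ the terms of \eqref{eq:pow} decay only like $k^{-5/2}$, so a truncation at $k\le150$ leaves an error of about $5\cdot10^{-5}$. That is harmless for four decimals but is nowhere estimated in the paper, and your closed form removes it entirely. Second, your expansions explain the paper's empirical remarks: $\alpha/\beta_1-r_0\approx\frac{\beta_2}{3\beta_1}r_0^3$ is of order $10^{-23}$, and the entropy gain is, to leading order, exactly half of the bound $\alpha^2/\beta_1$, hence $2.83$ versus $5.65$ (in units of $10^{-16}$). The paper's approach is quicker and needs no analysis, but it rests on machine truncation without quantified error.
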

Numerical approximation of $r_0$ using \eqref{eq:r0} below, where the series is truncated  to $1\leq k\leq 150$ and numerical approximation of the upper bound $\alpha/\beta_1$ indicate that the two values are equal up the order of $10^{-23}$. The computation of the difference $H[g_{r_0}]-H[g]$ based on a similar truncation of the series in \eqref{eq:pow} is slightly smaller than using the upper bound $\alpha^2/\beta_1$. Approximations for $r=-1$ and for $r=1$ have also been calculated using truncation of \eqref{eq:pow}. The computations have been carried out using Maple software.

\section{Proofs}
Let us first draw a consequence of Theorem~\ref{thm:1} and Lemma~\ref{thm:evalf} by noticing that the constants $\alpha,\beta_k$ are all positive. Since $(\beta_k)_{k\geq 1}$ is a decreasing sequence tending to zero by Lebesgue's Theorem on dominated convergence, the expression \eqref{eq:pow} shows that $H[g_r]$ is continuous on the interval $[-1,1]$ and $C^\infty$ on $(-1,1)$ with a strictly negative second derivative, hence a strictly concave function. The derivative on $(-1,1)$
$$
H[g_r]'=\alpha-\sum_{k=1}^\infty\frac{\beta_k}{2k-1}r^{2k-1}
$$
has the value $\alpha>0$ for $r=0$, and hence $H[g_r]>H[g]$ on a sufficiently small interval to the right of 0. This already shows that  the statement of \cite[Theorem 2]{N:T3} is wrong. Furthermore, the maximum of $H[g_r]$ occurs at the unique solution $r_0>0$ of the equation $H[g_r]'=0$, i.e., given by
\begin{equation}\label{eq:r0}  
\alpha=\sum_{k=1}^\infty\frac{\beta_k}{2k-1}r_0^{2k-1},
\end{equation}
and from this equation we get $\alpha>\beta_1 r_0$, hence
 $0<r_0<\alpha/\beta_1$. This gives the upper bound $H[g_{r_0}]<H[g]+\alpha^2/\beta_1$.

This shows that Corollary~\ref{thm:2} follows from Theorem~\ref{thm:1}. 

\medskip
{\bf Proof of Theorem~\ref{thm:1}:}
From \eqref{eq:ent} we get
\begin{align*}
H[g_r]&=-\int_0^\infty g(x)[1+r\sin(2\pi\log x)][\log g(x)+\log(1+r\sin(2\pi\log x))]\,dx\\
&=H[g] -r\int_0^\infty g(x)\log g(x) \sin(2\pi\log x)\,dx\\
&\phantom{=}-\int_0^\infty g(x)\log(1+r\sin(2\pi\log x))\,dx \\
&\phantom{=} -r\int_0^\infty g(x) \sin(2\pi\log x)\log(1+r\sin(2\pi\log x))\,dx,
\end{align*}
which is an expression with four terms. Denoting the last three terms $T_1,T_2,T_3$, we get using the substitution $t=\log x$
\begin{align*}
T_1&=-r\frac{1}{\sqrt{2\pi}}\int_{-\infty}^\infty \exp(-t^2/2)\sin(2\pi t)[-\log\sqrt{2\pi}-t-t^2/2]dt\\
&=r\frac{1}{\sqrt{2\pi}}\int_{-\infty}^\infty \exp(-t^2/2) t\sin(2\pi t)\,dt=r\alpha,
\end{align*}
where $\alpha$ is given in \eqref{eq:alpha}.
Using the  power series expansion of $\log(1+z)$ we find for $|r|<1$
\begin{align*}
T_2&=\sum_{n=1}^\infty (-1)^n\frac{r^{n}}{n} \frac{1}{\sqrt{2\pi}}\int_{-\infty}^\infty \exp(-t^2/2)\sin^{n}(2\pi t)\,dt\\
&= \sum_{k=1}^\infty \frac{r^{2k}}{2k}
\frac{1}{\sqrt{2\pi}}\int_{-\infty}^\infty\exp(-t^2/2)\sin^{2k}(2\pi t)\,dt,
\end{align*}
and
\begin{align*}
T_3&=\sum_{n=1}^\infty (-1)^n\frac{r^{n+1}}{n} \frac{1}{\sqrt{2\pi}}\int_{-\infty}^\infty \exp(-t^2/2)\sin^{n+1}(2\pi t)\,dt\\
&=  -\sum_{k=1}^\infty \frac{r^{2k}}{2k-1}\frac{1}{\sqrt{2\pi}}\int_{-\infty}^\infty\exp(-t^2/2)\sin^{2k}(2\pi t)\,dt.
 \end{align*}
This gives
$$
T_2+T_3=-\sum_{k=1}^\infty\frac{r^{2k}}{2k(2k-1)}\beta_k,
$$
with $\beta_k$ given in \eqref{eq:betak}.

Summing up we have
$$
H[g_r]=H[g]+\alpha r-\sum_{k=1}^\infty \frac{r^{2k}}{2k(2k-1)}\beta_k,
$$
which is \eqref{eq:pow}. $\square$

\medskip
{\bf Proof of Lemma~\ref{thm:evalf}:}
The Fourier transform of the normal density is given as
\begin{equation}\label{eq:Fou}
\frac{1}{\sqrt{2\pi}}\int_{-\infty}^\infty\exp(-t^2/2)\exp(itb)\,dt=\exp(-b^2/2),\quad b\in\R,
\end{equation}
hence
$$
\frac{1}{\sqrt{2\pi}}\int_{-\infty}^\infty\exp(-t^2/2)\cos(tb)\,dt=\exp(-b^2/2).
$$
Differentiation of this equation with respect to $b$  yields
$$
\frac{1}{\sqrt{2\pi}}\int_{-\infty}^\infty\exp(-t^2/2)t\sin(tb)\,dt=b\exp(-b^2/2),
$$
and this gives, for $b=2\pi$,
$\alpha=2\pi\exp(-2\pi^2)$.

Using Euler's formula we obtain
\begin{align*}
 \sin^{2k}(bt)&=\frac{(-1)^k}{4^k}\sum_{\ell=0}^{2k}\binom{2k}{\ell}\exp(ibt\ell)(-1)^{2k-\ell}\exp(-ibt(2k-\ell))\\
 &=\frac{(-1)^k}{4^k}\sum_{\ell=0}^{2k}\binom{2k}{\ell}(-1)^{\ell}\exp(2ibt(\ell-k))\\
 &=4^{-k}\sum_{\ell=-k}^{k}\binom{2k}{k+\ell}(-1)^{\ell}\exp(2ibt\ell).
\end{align*}
This gives
\begin{align*}
 \beta_k&=4^{-k}\sum_{\ell=-k}^{k}\binom{2k}{k+\ell}(-1)^{\ell}\frac{1}{\sqrt{2\pi}}\int_{-\infty}^\infty\exp(-t^2/2)\cos(4\pi t\ell)\,dt\\
 &=4^{-k}\sum_{\ell=-k}^{k}\binom{2k}{k+\ell}(-1)^{\ell}\exp(-8\pi^2\ell^2)\\
 &=4^{-k}\left(\binom{2k}{k}+2\sum_{\ell=1}^{k}\binom{2k}{k+\ell}(-1)^{\ell}\exp(-8\pi^2\ell^2)\right).
\end{align*}
$\square$

\noindent
Christian Berg\\
Department of Mathematical Sciences\\
University of Copenhagen \\
Universitetsparken 5\\
DK-2100, Denmark\\
{\em email}:\hspace{2mm}{\tt berg@math.ku.dk}

\vspace{0.5cm}

\noindent
Henrik Laurberg Pedersen\\
Department of Mathematical Sciences\\
University of Copenhagen \\
Universitetsparken 5\\
DK-2100, Denmark\\
{\em email}:\hspace{2mm}{\tt henrikp@math.ku.dk}

\end{document}